\documentclass{amsart}
\usepackage{graphicx}
\usepackage{amsmath}
\usepackage{amssymb,bbm}%
\usepackage[numbers, square]{natbib}
\usepackage{graphicx}
\usepackage{color}





\newcommand{\Vol}{\mathop{\mathrm{Vol}}\nolimits}


\renewcommand{\Im}{\operatorname{Im}}
\renewcommand{\Re}{\operatorname{Re}}


\theoremstyle{plain}
\newtheorem{theorem}{Theorem}[section]
\newtheorem{lemma}[theorem]{Lemma}

\newtheorem{proposition}[theorem]{Proposition}

\theoremstyle{definition}

\theoremstyle{remark}

\begin{document}

\author[F. G\"otze]{Friedrich G\"otze}
\address{Friedrich G\"otze, Department of Mathematics,
Bielefeld University,
P. O. Box 10 01 31,
33501 Bielefeld, Germany}
\email{goetze@math.uni-bielefeld.de}

\author[D. Kaliada]{Dzianis Kaliada}
\address{Dzianis Kaliada, Institute of Mathematics, National Academy of Sciences of Belarus, 220072 Minsk, Belarus}
\email{koledad@rambler.ru}

\author[D. Zaporozhets]{Dmitry Zaporozhets}
\address{Dmitry Zaporozhets\\
St.\ Petersburg Department of
Steklov Institute of Mathematics,
Fontanka~27,
 191011 St.\ Petersburg,
Russia}
\email{zap1979@gmail.com}


\title{Distribution of complex algebraic numbers}
\keywords{algebraic numbers, distribution of algebraic numbers, integral polynomials}
\subjclass[2010]{11N45 (primary), 11C08 (secondary).}
\thanks{Supported by CRC 701, Bielefeld University (Germany).}

\begin{abstract}
For a region $\Omega \subset\mathbb{C}$ denote by $\Psi(Q;\Omega)$ the number of complex algebraic numbers in $\Omega$ of degree $\leq n$ and naive height $\leq Q$. We show that
$$
\Psi(Q;\Omega)=\frac{Q^{n+1}}{2\zeta(n+1)}\int_\Omega\psi(z)\,\nu(dz)+O\left(Q^n \right),\quad Q\to\infty,
$$
where $\nu$ is the Lebesgue measure on the complex plane and the function $\psi$ will be given explicitly.
\end{abstract}

\maketitle

\section{Introduction}

Results on the distribution of both the real and complex algebraic numbers concerning regular systems (Beresnevich \cite{vB99}, Bernik and Vasil'ev~\cite{BV99}; see also the review by Bugeaud~\cite{yB04}) suggest that for any fixed degree $n$ algebraic numbers of sufficiently large height are distributed quite regularly.

An important question for all algebraic numbers (of a given height) in this respect had been asked by Mahler in his letter to Sprin\-d\v{z}uk in 1985: what is the distribution of algebraic numbers of a fixed degree $n\geq 2$?

The following answer to this question was suggested in~\cite{dK14} (see also~\cite{dK13}, \cite{dK15} for the case $n=2$). Fix $n\geq2$ and consider an arbitrary interval $I\subset\mathbb{R}$. Denote by $\Phi(Q;I)$ the number of real algebraic numbers in $I$ of degree at most $n$ and height at most $Q$. Then
\begin{equation}\label{1817}
\Phi(Q;I)=\frac{Q^{n+1}}{2\zeta(n+1)}\int_I\varphi(x)\,dx+ O\left(Q^n \log^{l(n)} Q\right),\quad Q\to \infty,
\end{equation}
where $\zeta(\cdot)$  denotes  the Riemann zeta function and $l(n)$ is defined by
\[
l(n) =  \begin{cases}
   1, & n=2,\\
   0,  & n\geq 3.
 \end{cases}
\]
The limit density $\varphi$ is given by the formula
\[
\varphi(x)=\int_{B_x}\left|\sum_{k=1}^n kt_kx^{k-1}\right|\,dt_1\dots dt_n,
\]
where the domain $B_x$ is defined by
\[
B_x=\left\{(t_1,\dots,t_n)\in\mathbb{R}^n\,:\,\max_{1\leq k\leq n}|t_k|\leq1, \ |t_nx^n+\dots+t_1x|\leq1 \right\}.
\]
If  $x\in [-1+1/\sqrt2, 1-1/\sqrt2]$, then $\varphi(x)$ can be simplified as follows:
\[
\varphi(x)=\frac{2^{n-1}}{3}\left(3+\sum_{k=1}^{n-1}(k+1)^2x^{2k}\right).
\]
In \cite{GKZ15c} this result  was generalized to the limit correlations between real algebraic conjugates.
Note in passing that $2^{-n-1}\varphi$ coincides with the density of the real roots of a random polynomial with independent coefficients uniformly distributed on $[-1,1]$ (see, e.g.,~\cite[Section~3]{GKZ15b}).

\bigskip

The aim of this note is to obtain a \emph{complex} counterpart of~\eqref{1817}.
The real and complex cases are quite different from each other. In particular, the result for the non-real numbers can not be deduced from the real case.
Therefore we use a different approach to solve this problem.

\bigskip

Let us give a very brief overview of some related works. There are numerous papers studying the distribution of distances between algebraic conjugates.
In this active research area, notable results were obtained in~\cite{BM04}, \cite{jE04}, \cite{BBG10}, \cite{BM10}, \cite{yBaD11}, \cite{yBaD14}.
{This problem is closely related with the problem of the distribution of polynomial discriminants \cite{nBfG13}, \cite{fGdZ14}.}

In~\cite{BBG10} Beresnevich, Bernik and G\"otze obtained the following result. Let $n\ge 2$ and $0 <\rho \le \frac{n+1}{3}$. Then for all sufficiently large $Q$ and any interval $I\subset[-\frac12, \frac12]$ there exist at least $\frac12 Q^{n+1-2\rho}|I|$ real algebraic numbers $\alpha$ of degree $n$ and height $H(\alpha)\asymp_n Q$ having a real conjugate $\alpha^*$ such that $|\alpha - \alpha^*| \asymp_n Q^{-\rho}$.

Using potential theory Pritsker \cite{iP11} considered the case when $n\to\infty$ and found the asymptotic distribution of the roots of an integral polynomial whose generalized  Mahler measure satisfies some conditions. As a corollary he obtained the solution of Schur's problem on traces of algebraic numbers.
The paper \cite{iP11} also contains a number of references on this subject.
Pritsker's results are closely related to the problem of the distribution of the complex roots of random polynomials with i.i.d. coefficients  when $n\to\infty$. The landmark result of Erd\H{o}s and Tur\'{a}n~\cite{ET50} implies that the arguments of the complex roots  are asymptotically uniformly distributed (see~\cite{IZ13} for the proof without any additional assumption). Moreover, under some quite general assumptions the roots are clustered near the unit circle (see~\cite{SS62}, \cite{IZ13}).

Some papers are devoted to the asymptotic behavior of the number of algebraic elements $\alpha$ of a fixed degree $n$ and a bounded multiplicative Weil height $\mathcal{H}(\alpha)\le X$ over some base number field (as $X$ tends to infinity). Let $\overline{\mathbb{Q}}_n(X)$ denote the number of such elements over the field of rational numbers $\mathbb{Q}$.
Masser and Vaaler \cite{MV08} established the following asymptotic formula using a result by Chern and Vaaler~\cite{CV01}:
\[
\overline{\mathbb{Q}}_n(X) = \sigma_n X^{n(n+1)} + O\left(X^{n^2} \log^{l(n)} X\right),\quad X\to\infty,
\]
where the explicit factor $\sigma_n$ and the implicit big-O-notation constant depend on $n$ only.
Here the Weil height $\mathcal{H}(\alpha)$ can be expressed in terms of the Mahler measure by $\mathcal{H}(\alpha) = M(\alpha)^{1/n}$.
Note that $X$ is of order $Q^{1/n}$, where $Q$ is the upper bound for the corresponding naive heights.
In \cite{MV07} Masser and Vaaler generalized this result to arbitrary base number fields.
References and some historical results related to the topic can be found in \cite[Chapter 3, \S 5]{sL83}.
Note that these results are based on the use of the Weil height and do not overlap with ours.

\section{Main result}

For an integral polynomial
\[
p(x) = a_n x^n + \dots + a_1 x + a_0, \quad a_n\ne 0,
\]
its height is defined as $H(p):=\max_{0\le i\le n} |a_i|$.

A \emph{minimal polynomial} $p$ of an algebraic number $\alpha$ is an integral nonzero polynomial of the minimal degree with coprime coefficients such that $p(\alpha)=0$. Given an algebraic number $\alpha$, its  degree $\deg(\alpha)$ and height $H(\alpha)$ are defined as degree and height of the corresponding minimal polynomial.

We always assume that degree  $n$ is arbitrary but \emph{fixed}. Hence the constants in different asymptotic relations (as $Q\to\infty$)  in this paper might depend on $n$.

For a complex region $\Omega\subset\mathbb{C}$ denote by $\Psi(Q;\Omega)$ the number of algebraic numbers in $\Omega$ of degree at most $n$ and height at most $Q$. We always assume that $\Omega$ does not intersect the real axis and that its boundary consists of a finite number of algebraic curves.

\begin{theorem}\label{1844}
Let $n\ge 2$ be a fixed arbitrary integer.
We have that
\begin{equation}\label{1142}
\Psi(Q;\Omega)=\frac{Q^{n+1}}{2\zeta(n+1)}\int_\Omega\psi(z)\nu(dz)+O\left(Q^n \right),\quad Q\to\infty,
\end{equation}
where $\nu$ is the Lebesgue measure on the complex plane. The limit density $\psi$ is given by the formula
\begin{equation}\label{1132}
\psi(z)=\frac{1}{|\Im z|}\int_{D_z}\left|\sum_{k=1}^{n-1}t_k\left((k+1)z^{k}-\frac{\Im z^{k+1}}{\Im z}\right)\right|^2\,dt_1\dots dt_{n-1}.
\end{equation}
The integration is performed over the region
\begin{multline*}
D_z = \left\{(t_1,\dots,t_{n-1})\in\mathbb{R}^{n-1} : \max\limits_{1\le k\le n-1} |t_k|\le 1, \phantom{\left|\sum_{1}^1\right|} \right. \\
\left.
\left|z \sum_{k=1}^{n-1} t_k \left(z^{k} - \frac{\Im z^{k+1}}{\Im z}\right)\right|\le 1, \
\left|\frac{1}{\Im z}\sum_{k=1}^{n-1} t_k \Im z^{k+1}\right|\le 1 \right\}.
\end{multline*}
\end{theorem}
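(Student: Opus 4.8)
I would not count algebraic numbers directly but rather the number $N(Q;\Omega)$ of pairs $(p,z)$ where $p$ is an integer polynomial of degree \emph{exactly} $n$ with $H(p)\le Q$ and $z\in\Omega$ is a root of $p$ (each distinct root of each such $p$ counted once). Because $\Omega$ avoids the real axis, every algebraic number in $\Omega$ is non-real, hence of degree between $2$ and $n$; those of degree $d\le n-1$ number $O(Q^{d+1})=O(Q^n)$ altogether and go into the error. A non-real algebraic number of degree $n$ has a unique primitive irreducible minimal polynomial (unique once we fix the sign of the leading coefficient), and conversely each distinct root in $\Omega$ of such a polynomial is such a number. Writing a degree-$n$ integer polynomial uniquely as $c\,q$ with $c\in\Z\setminus\{0\}$ and $q$ primitive with positive leading coefficient, and inverting the resulting identity $N(Q;\Omega)=2\sum_{m\ge1}N_{\mathrm{prim}}(Q/m;\Omega)$ by M\"obius, one gets $\Psi(Q;\Omega)=\frac{1}{2\zeta(n+1)}N(Q;\Omega)+O(Q^n)$ once the estimate for $N$ below is in hand; discarding the reducible $q$ costs $O(Q^n)$ by the usual bound on the number of reducible integer polynomials of bounded degree and height (via Mahler's inequality for heights of factors), and for the borderline case $n=2$ this cost is even $0$, since a reducible quadratic has only rational roots --- which is exactly why the logarithmic factor of \eqref{1817} disappears here. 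So it remains to prove $N(Q;\Omega)=Q^{n+1}\int_\Omega\psi(z)\,\nu(dz)+O(Q^n)$.

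For this, fix the top coefficients $(a_2,\dots,a_n)\in\Z^{n-1}$ and put $g(z)=a_2z^2+\dots+a_nz^n$. Splitting $p(z)=0$ into its imaginary and real parts solves for the two remaining real unknowns: $a_1=-\Im g(z)/\Im z$ and $a_0=-\Re g(z)-a_1\Re z$, both real-analytic in $z=x+iy$ on $\Omega$ (smooth since $\Im z$ is bounded away from $0$ there). A direct computation, using the Cauchy--Riemann equations for $g$, yields
\[
\left|\frac{\partial(a_0,a_1)}{\partial(x,y)}\right|
=\frac{1}{|\Im z|}\left|g'(z)-\frac{\Im g(z)}{\Im z}\right|^{2}
=\frac{1}{|\Im z|}\left|\sum_{k=1}^{n-1}a_{k+1}\Bigl((k+1)z^{k}-\frac{\Im z^{k+1}}{\Im z}\Bigr)\right|^{2},
\]
and, since $z^{k+1}-z\,\Im z^{k+1}/\Im z$ is real, the height constraints $|a_0|,|a_1|,|a_{k+1}|\le Q$ become, after the substitution $a_{k+1}=Qt_k$, precisely the inequalities defining $D_z$.

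Now, for each fixed $(a_2,\dots,a_n)$, the number of integer pairs $(a_0,a_1)\in[-Q,Q]^2$ whose polynomial has a root in $\Omega$ equals the number of lattice points, counted with multiplicity, in the image of the admissible part of $\Omega$ under $z\mapsto(a_0(z),a_1(z))$. By the area formula this is $\int_\Omega|J(z)|\,\ind\{|a_0(z)|,|a_1(z)|\le Q\}\,\nu(dz)$ up to an error $O(Q)$: the boundary of that image comes from $\partial\Omega$ (finitely many algebraic arcs, pushed forward by a map with derivative $O(Q)$) and from the box sides, so it has length $O(Q)$, while the critical curve $g'(z)=\Im g(z)/\Im z$ cuts $\Omega$ into boundedly many pieces on which the map is a diffeomorphism onto a semialgebraic region of bounded complexity. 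Summing over the $\asymp Q^{n-1}$ admissible $(a_2,\dots,a_n)$, substituting $a_{k+1}=Qt_k$ (which pulls out a factor $Q^2$ from $|J|$) and reading off a Riemann sum gives $N(Q;\Omega)=Q^{n+1}\int_\Omega\psi(z)\,\nu(dz)+O(Q^n)$, the accumulated lattice and Riemann-sum errors being $O(Q^n)$; one may take $\Omega$ bounded since all but $O(Q^n)$ of the algebraic numbers of height $\le Q$ lie in a fixed disc.

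The main obstacle is this last step: making the lattice-point-versus-area comparison uniform in $(a_2,\dots,a_n)$ and over the $Q$-dependent, possibly long, thin and curved admissible regions, and controlling the neighbourhood of the critical curve where the parametrization degenerates; this is exactly where the hypothesis that $\partial\Omega$ is a finite union of algebraic curves is needed, to bound the number of pieces and the perimeters uniformly. The Jacobian identity, the M\"obius inversion, and the accounting for reducible and for lower-degree polynomials are, by contrast, routine.
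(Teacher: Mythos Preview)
Your argument is correct in outline and takes a genuinely different route from the paper.  The paper works entirely in the $(n+1)$-dimensional coefficient space: it introduces the sets $A_k\subset[-1,1]^{n+1}$ of coefficient vectors whose polynomial has exactly $k$ roots in $\Omega$, applies a single M\"obius--Lipschitz lemma to get $\lambda^*(QA_k)=\Vol(A_k)Q^{n+1}/\zeta(n+1)+O(Q^n)$, and then observes that $\sum_k k\Vol(A_k)=2^{n+1}\,\mathbb{E}N(\Omega)$ for the random polynomial with i.i.d.\ uniform $[-1,1]$ coefficients.  The density $\psi$ is then simply \emph{imported} from the formula for $\mathbb{E}N(\Omega)$ in~\cite{dZ04} and matched in polar coordinates.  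By contrast, you slice off the two bottom coefficients, parametrise the locus $p(z)=0$ by the root $z$ itself, and recover $\psi$ as the Jacobian $\bigl|\partial(a_0,a_1)/\partial(x,y)\bigr|=\lvert g'(z)-\Im g(z)/\Im z\rvert^{2}/|\Im z|$; this is essentially a bare-hands derivation of the special case of~\cite{dZ04} that the paper invokes, and it makes the origin of the formula for $\psi$ and of the domain $D_z$ completely transparent.

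The trade-off is exactly the one you identify.  In the paper the error term comes from one clean application of the Davenport--Lipschitz bound to a region whose boundary is a finite union of algebraic hypersurfaces (so the implicit constant depends only on $n$ and on the number and degree of the curves in $\partial\Omega$), and the formula for the main term is quoted.  In your approach the density comes for free, but the error term requires a two-dimensional lattice count uniform over all $(a_2,\dots,a_n)$ and a Riemann-sum comparison; the pieces are the perimeter bound on the pushforward of $\partial\Omega$ (which is where the algebraicity of $\partial\Omega$ enters), the control near the critical curve $g'(z)=\Im g(z)/\Im z$, and the discontinuity of $\mathbbm{1}_{D_z}$ in $t$.  None of this is fatal, but it is more work than the single $(n{+}1)$-dimensional count.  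The reduction to bounded $\Omega$ that you sketch at the end is not quite right as stated (algebraic numbers of height $\le Q$ do not all lie in a fixed disc), but it is also unnecessary: both the main term and the perimeter estimates localise to any fixed bounded piece of $\Omega$, and the tail contributes $o(Q^{n+1})$ uniformly by the decay $\psi(z)\asymp |\Im z|\,|z|^{-6}$ -- or, more in keeping with your approach, one can simply restrict to $|a_n|\ge 1$ (losing $O(Q^n)$ polynomials) so that all roots stay inside $|z|\le 1+Q/|a_n|$ and then control the large-$|z|$ slab directly.
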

The implicit constant in the big-O-notation in \eqref{1142} depends on $n$, the number of the algebraic curves that form the boundary $\partial \Omega$, and their maximal degree only.

The proof of Theorem~\ref{1844} is given in Section~\ref{1845}. Now let us derive several properties of the limit density $\psi$.
\begin{proposition}
The function $\psi$ is positive on $\mathbb{C}$ and satisfies the following functional equations:
\begin{gather}
\psi(-z) = \psi(\bar z) = \psi(z), \label{eq-sym}\notag\\
\psi \left(\frac{1}{z}\right) = |z|^4 \psi(z). \label{1436}
\end{gather}
\end{proposition}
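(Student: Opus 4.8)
The plan is to read off all three functional equations directly from the integral representation \eqref{1132}, by exhibiting for each case an explicit change of variables on the block $(t_1,\dots,t_{n-1})$ that maps $D_z$ onto the corresponding $D_{z'}$ (with $z'\in\{-z,\bar z,1/z\}$) and transforms the integrand appropriately. Positivity is the easiest point: the integrand in \eqref{1132} is a nonnegative (indeed strictly positive except on a null set) continuous function of $t$, the prefactor $1/|\Im z|$ is positive for $z\notin\R$, and the region $D_z$ contains a neighborhood of the origin in $\R^{n-1}$ (all the defining inequalities are strict-feasible at $t=0$), so the integral is strictly positive. (If one wants $\psi>0$ on all of $\C$ including the real axis one checks separately that the formula extends continuously, but since $\Omega$ avoids the real axis this is not strictly needed.)

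For the symmetry $\psi(\bar z)=\psi(z)$: replacing $z$ by $\bar z$ sends $z^k\mapsto\overline{z^k}$ and $\Im z^{k+1}\mapsto-\Im z^{k+1}$, while $\Im z\mapsto-\Im z$, so each ratio $\Im z^{k+1}/\Im z$ is invariant; hence the quantity $(k+1)z^k-\Im z^{k+1}/\Im z$ becomes its own complex conjugate coefficientwise, the modulus-squared integrand is unchanged, the prefactor $1/|\Im z|$ is unchanged, and the three constraints defining $D_z$ involve only $|\cdot|$ of conjugated quantities (or the invariant ratios), hence $D_{\bar z}=D_z$. So $\psi(\bar z)=\psi(z)$ with no change of variables at all. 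For $\psi(-z)=\psi(z)$: the substitution $t_k\mapsto(-1)^{k+1}t_k$ (a signed permutation of coordinates, Jacobian $\pm1$, preserving the cube $\max|t_k|\le 1$) compensates the sign changes produced by $z\mapsto -z$, under which $z^k\mapsto(-1)^k z^k$ and $\Im z^{k+1}/\Im z\mapsto(-1)^k\,\Im z^{k+1}/\Im z$ while $|\Im z|$ is unchanged; after this substitution every sum over $k$ is reproduced exactly, so $\psi(-z)=\psi(z)$. Combining the two gives \eqref{eq-sym}.

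The main work, and the expected obstacle, is the inversion identity \eqref{1436}. Here the natural idea is to relate the polynomial data at $z$ and at $1/z$: if $p(x)=\sum_{k} a_k x^k$ then its reversed polynomial $x^n p(1/x)=\sum_k a_{n-k}x^k$ has the same height, so intuitively the measure-theoretic count $\Psi$ is symmetric under $z\mapsto 1/z$ up to the Jacobian $|z|^{-4}$ of that map on $\C$ (the factor $|z|^{-2}$ from $\nu(dz)$ squared because $\psi$ is a density against $\nu$, giving $|z|^4$). To turn this into a clean change of variables inside \eqref{1132} one writes the parametrization underlying $D_z$ (two linear constraints coming from $|p(z)|\le 1$ split into real and imaginary parts, after eliminating $a_0$ and one more coefficient, leaving the $n-1$ free parameters $t_1,\dots,t_{n-1}$) and checks that the substitution induced by $a_k\leftrightarrow a_{n-k}$ is a linear bijection of $\R^{n-1}$ whose Jacobian absolute value is $|z|^{-2(n-1)}\cdot(\text{something})$ — one has to be careful because the elimination that produces \eqref{1132} is not symmetric in $a_0$ versus $a_n$, so the reversed data must be re-expressed in the same normal form, and bookkeeping the powers of $|z|$ and $|\Im z|$ from $z^{k+1}\mapsto z^{-(k+1)}$, $\Im(z^{-(k+1)}) = -\Im z^{k+1}/|z|^{2(k+1)}$, etc., is where errors creep in. I would organize it as: (i) express $\psi(1/z)$ by substituting $1/z$ into \eqref{1132}; (ii) simplify each building block $(k+1)z^{-k}-\Im z^{-(k+1)}/\Im z^{-1}$ and each constraint using $\Im(1/z)=-\Im z/|z|^2$ and $\Im z^{-m}=-\Im z^{m}/|z|^{2m}$; (iii) factor out powers of $|z|$ from the integrand (expect $|z|^{-2}$ overall from the modulus-squared after pulling $|z|^{-?}$ out of each term) and from the prefactor ($1/|\Im(1/z)|=|z|^2/|\Im z|$); (iv) perform the coordinate change $t_k\mapsto c_k t_{n-k}$ with scalars $c_k$ chosen so that the cube and the two linear constraints go over exactly to those defining $D_z$, reading off its Jacobian; (v) collect all the powers of $|z|$ and check they total $|z|^4$. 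The identity \eqref{eq-sym} can be used freely in the middle of this computation to replace $1/z$ by $1/\bar z$ or $-1/z$ if that makes the algebra more symmetric. I expect step (iv) — finding the exact rescaling that matches all three constraints simultaneously — to be the delicate point; everything else is routine though lengthy.
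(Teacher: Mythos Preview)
Your treatment of positivity and of the symmetries $\psi(-z)=\psi(\bar z)=\psi(z)$ is correct and is essentially what the paper dismisses as ``trivial''.

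For the inversion identity~\eqref{1436} the paper takes a different and much shorter route: it never manipulates the integral~\eqref{1132} at all. It observes that if $g$ is a prime integral polynomial of degree $n$ then so is $z^n g(1/z)$, with the same height; hence $\Psi(Q;\Omega)=\Psi(Q;\Omega^{-1})$ for every admissible region. Letting $Q\to\infty$ in Theorem~\ref{1844} gives
\[
\int_\Omega \psi(z)\,\nu(dz)=\int_{\Omega^{-1}}\psi(z)\,\nu(dz)=\int_{\Omega}\psi(1/z)\,|z|^{-4}\,\nu(dz),
\]
the last step being the substitution $z\mapsto 1/z$ with Jacobian $|z|^{-4}$. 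Since $\Omega$ is arbitrary, $\psi(1/z)=|z|^4\psi(z)$. This is a two-line argument once the main theorem is available.

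Your direct approach is sound in principle, but the concrete substitution you propose in step~(iv), namely $t_k\mapsto c_k t_{n-k}$, cannot work as stated. In the parametrization behind~\eqref{1132} one has $t_k=a_{k+1}$ for $k=1,\dots,n-1$, while $a_0$ and $a_1$ have been eliminated via the condition $p(z)=0$; the two linear constraints in $D_z$ are precisely $|a_0|\le1$ and $|a_1|\le1$. The reversal $a_k\leftrightarrow a_{n-k}$ therefore swaps the eliminated pair $\{a_0,a_1\}$ with the free pair $\{a_{n-1},a_n\}=\{t_{n-2},t_{n-1}\}$, so after re-expressing everything in the same normal form the induced map on $\R^{n-1}$ is a genuinely non-diagonal linear map that exchanges two of the cube faces with the two hyperplane constraints. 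A scaled index reversal cannot do this: to preserve the cube you would need $|c_k|=1$, hence Jacobian $\pm1$, and one checks that the integrand does not then pick up the missing power of $|z|$ either. You rightly flag step~(iv) as the delicate point, but the required change of variables is more than a choice of scalars; the paper's argument bypasses this computation entirely.
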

\begin{proof}
The positiveness as well as the first relation are trivial. To prove~\eqref{1436}, note that for any integral irreducible polynomial $g(z)$ of degree $n$, the polynomial $z^n g(z^{-1})$ is also irreducible and has the same degree and the same height. Hence for any region $\Omega\subset\mathbb{C}$ it holds
\[
\Psi(Q;\Omega)=\Psi(Q;\Omega^{-1}),
\]
where $\Omega^{-1}$ is defined as $\Omega^{-1}= \left\{z^{-1}\in\mathbb{C} : z \in \Omega \right\}$. Letting $Q$ tend to infinity, we get by applying Theorem~\ref{1844}
\[
\int_{\Omega} \psi(z)\,\nu(dz) =\int_{\Omega^{-1}} \psi(z)\,\nu(dz) .
\]
On the other hand, after the substitution $z\to1/z$, we obtain
\[
\int_{\Omega} \psi(z)\,\nu(dz) =\int_{\Omega^{-1}} \psi(z^{-1})|z|^{-4}\,\nu(dz).
\]
Since the class of regions $\Omega$ is sufficiently large, \eqref{1436} follows.
\end{proof}

\begin{proposition}\label{prop-repuls}
Near the real line the density $\psi$ admits the following asymptotic approximation:
\begin{equation}\label{1541}
\psi(x_0+iy)=A|y|\cdot(1+o(1)),\quad y\to0,
\end{equation}
where the constant $A$ does not depend on $y$ and can be written explicitly as follows:
\begin{equation*}
A = \int\limits_{\widetilde{D}_{x_0}} \left|\sum_{k=1}^{n-1} k(k+1) t_k x_0^{k-1}\right|^2 dt_1\dots dt_{n-1}.
\end{equation*}
Here the integration is performed over the region
\begin{multline*}
\widetilde{D}_{x_0} = \left\{(t_1,\dots,t_{n-1})\in\mathbb{R}^{n-1} : \max\limits_{1\le k\le n-1} |t_k|\le 1, \phantom{\left|\sum_{1}^1\right|} \right. \\
\left. \left|\sum_{k=1}^{n-1} k t_k x^{k+1}_0\right|\le 1, \ \left|\sum_{k=1}^{n-1} (k+1) t_k x^{k}_0\right|\le 1 \right\}.
\end{multline*}
\end{proposition}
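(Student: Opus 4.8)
The plan is to analyze the limit density $\psi(z)$ from formula~\eqref{1132} directly as $z = x_0 + iy$ with $y \to 0$, tracking the dependence on $y$ explicitly. The essential observation is that the integrand and the defining region $D_z$ are governed by the quantities $\Im z^{k+1}/\Im z$, which are polynomials in $x_0$ and $y$. I would first establish the elementary identity $\Im z^{m}/\Im z \to m x_0^{m-1}$ as $y\to 0$; more precisely, writing $z = x_0 + iy$ and expanding by the binomial theorem, one gets $\Im z^{m} = \binom{m}{1} x_0^{m-1} y - \binom{m}{3} x_0^{m-3} y^3 + \dots$, so that $\Im z^{m}/\Im z = m x_0^{m-1} + O(y^2)$, uniformly for $x_0$ in a bounded set.

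Next I would substitute this into both the integrand and the constraints of $D_z$. For the integrand, the inner sum becomes
\[
\sum_{k=1}^{n-1} t_k\left((k+1)z^k - \frac{\Im z^{k+1}}{\Im z}\right) = \sum_{k=1}^{n-1} t_k\left((k+1)x_0^k - (k+1)x_0^k\right) + (\text{terms of order } y) = y \sum_{k=1}^{n-1} k(k+1) t_k x_0^{k-1} + O(y^2),
\]
where the leading cancellation of the $O(1)$ part is the key point: the real part of $(k+1)z^k$ at $y=0$ equals exactly $(k+1)x_0^k$, which matches $\Im z^{k+1}/\Im z$ at $y=0$, and the coefficient of the linear-in-$y$ term is obtained by differentiating $(k+1)z^k$ and $\Im z^{k+1}/\Im z$ in $y$ at $y = 0$. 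Thus $\left|\sum_{k=1}^{n-1} t_k(\dots)\right|^2 = y^2 \left|\sum_{k=1}^{n-1} k(k+1) t_k x_0^{k-1}\right|^2 (1 + o(1))$. Dividing by $|\Im z| = |y|$ then produces the factor $|y|$ appearing in~\eqref{1541}, with $A$ being the integral of the squared leading coefficient. For the region, I would check that the first constraint $\max_k |t_k| \le 1$ is unchanged, and that in the limit $y\to 0$ the other two constraints
\[
\left|z \sum_{k=1}^{n-1} t_k\left(z^k - \frac{\Im z^{k+1}}{\Im z}\right)\right|\le 1, \qquad \left|\frac{1}{\Im z}\sum_{k=1}^{n-1} t_k \Im z^{k+1}\right| \le 1
\]
converge respectively to $\left|\sum_{k=1}^{n-1} k t_k x_0^{k+1}\right| \le 1$ (using $z^k - \Im z^{k+1}/\Im z \to x_0^k - (k+1)x_0^k = -k x_0^k$ and the prefactor $z \to x_0$, up to sign which disappears under the modulus) and $\left|\sum_{k=1}^{n-1}(k+1) t_k x_0^k\right| \le 1$, which are exactly the constraints defining $\widetilde D_{x_0}$.

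Finally I would assemble these pieces via a dominated-convergence argument: since all the defining functions are continuous in $y$ near $0$ and the region stays inside the fixed box $\{\max_k |t_k|\le 1\}$, the rescaled integrand $\frac{1}{y^2}\left|\sum t_k(\dots)\right|^2 \ind_{D_z}$ converges pointwise a.e.\ to $\left|\sum k(k+1)t_k x_0^{k-1}\right|^2 \ind_{\widetilde D_{x_0}}$ and is uniformly bounded, so $\psi(x_0 + iy)/|y| \to A$. The one point requiring a little care — and the main technical obstacle — is the behavior of the indicator functions near the boundary of $\widetilde D_{x_0}$: one must ensure the boundary $\partial \widetilde D_{x_0}$ has $(n-1)$-dimensional Lebesgue measure zero (it is a finite union of algebraic hypersurfaces, hence negligible), so that the pointwise convergence $\ind_{D_z}\to\ind_{\widetilde D_{x_0}}$ holds for almost every $(t_1,\dots,t_{n-1})$, which is all that is needed for the convergence of the integrals. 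This handles the case $x_0 \ne 0$; the case $x_0 = 0$ (if it is within the allowed range) follows the same computation with the obvious simplifications.
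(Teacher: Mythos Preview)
Your proposal is correct, and the approach differs from the paper's in a worthwhile way. You proceed by Taylor-expanding in $y$: you identify $\Im z^{m}/\Im z = m x_0^{m-1} + O(y^2)$, observe the cancellation of the $O(1)$ terms in $(k+1)z^k - \Im z^{k+1}/\Im z$, and extract the linear-in-$y$ coefficient, then close with a dominated-convergence argument after noting that $\partial \widetilde D_{x_0}$ is a finite union of algebraic hypersurfaces and hence Lebesgue-null. (One small notational point: the linear term is actually $i\,k(k+1)x_0^{k-1} y$, not $k(k+1)x_0^{k-1} y$; since you immediately take the modulus squared this is harmless, but it is worth stating precisely.)

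The paper instead uses the exact factorization
\[
(k+1)z^{k}-\frac{\Im z^{k+1}}{\Im z}=(z-\bar z)\sum_{s=1}^{k} s\,z^{s-1}\bar z^{k-s},
\]
obtained from the geometric-series identity $\Im z^{k+1}/\Im z=\sum_{j=0}^k z^{k-j}\bar z^{\,j}$. Since $z-\bar z=2i\Im z$, this pulls the factor $|\Im z|$ out of the integrand exactly for every $z$, giving a closed formula $\psi(z)=4|\Im z|\int_{D_z}|\cdots|^2\,dt$ in which the integrand and the region are manifestly continuous as $\Im z\to 0$. The limit is then read off without any Taylor expansion or appeal to dominated convergence. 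The trade-off: the paper's identity is slicker and gives an exact rewriting of $\psi$ valid for all $z$ (useful also for Proposition~\ref{prop-asymp}), whereas your route is more elementary, requires no clever identity, and makes the justification of passing to the limit (the boundary measure-zero point) explicit, which the paper only tacitly assumes.
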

Relation~\eqref{1541} may be regarded as a ``repulsion'' of exponent $1$ of complex roots from the real axis.

\begin{proof}
Since
\[
\frac{\Im z^{k+1}}{\Im z} = \frac{z^{k+1} - \bar{z}^{k+1}}{z - \bar{z}} = \sum_{j=0}^{k} z^{k-j}\, \bar{z}^j,
\]
it follows that
\begin{align*}
(k+1) z^{k} - \frac{\Im z^{k+1}}{\Im z} &= \sum_{j=0}^{k} z^{k-j} \left(z^j - \bar{z}^j\right)\\
&=(z - \bar{z}) \sum_{j=1}^{k} z^{k-j} \sum_{m=0}^{j-1} z^{j-1-m} \bar{z}^m = 
(z - \bar{z}) \sum_{s=1}^{k} s z^{s-1} \bar{z}^{k-s}.
\end{align*}
Hence  $\psi(z)$ and $D_z$ can be rewritten as follows:
\begin{equation*}
\psi(z) = 4\, |\Im z|
\int_{D_z} \left|\sum_{k=1}^{n-1} t_k \sum_{s=1}^{k} s z^{s-1} \bar{z}^{k-s}\right|^2 dt_1\dots dt_{n-1},
\end{equation*}
and
\begin{multline*}
D_z = \left\{(t_1,\dots,t_{n-1})\in\mathbb{R}^{n-1} : \max\limits_{1\le k\le n-1} |t_k|\le 1, \phantom{\left|\sum_{1}^1\right|} \right. \\
\left.
\left|\sum_{k=1}^{n-1} t_k \sum_{j=1}^{k} z^{k-j+1}\, \bar{z}^j \right|\le 1, \
\left|\sum_{k=1}^{n-1} t_k \sum_{j=0}^{k} z^{k-j}\, \bar{z}^j\right|\le 1 \right\}.
\end{multline*}
Note that $\widetilde{D}_{x_0} = D_{x_0+0\cdot i}$. Letting $\Im z \to 0$ concludes the proof.
\end{proof}

\begin{proposition}\label{prop-asymp}
For $|z|\ge 1$, the function $\psi(z)$ can be estimated by
\[
\psi(z) \asymp_n \frac{|\Im z|}{|z|^6},
\]
where the implicit constant depends on $n$ only.
\end{proposition}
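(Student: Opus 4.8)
The plan is to reduce the behaviour of $\psi$ for large $|z|$ to its behaviour near the origin via the functional equation \eqref{1436}, and to control the latter by a uniform version of the repulsion estimate \eqref{1541}. Rewrite \eqref{1436} as $\psi(z)=|z|^{-4}\psi(1/z)$, and note $\Im(1/z)=-\Im z/|z|^{2}$, so that $|\Im(1/z)|=|\Im z|/|z|^{2}$. Hence, if one establishes
\[
\psi(w)\asymp_n|\Im w|\qquad\text{uniformly over }\ 0<|w|\le1,\ \Im w\ne0,
\]
then for every $z$ with $|z|\ge1$, $\Im z\ne 0$, the point $w=1/z$ lies in the punctured closed unit disc, and
\[
\psi(z)=|z|^{-4}\psi(1/z)\asymp_n|z|^{-4}\,|\Im(1/z)|=|z|^{-4}\cdot\frac{|\Im z|}{|z|^{2}}=\frac{|\Im z|}{|z|^{6}},
\]
the constant depending only on $n$. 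Thus the proposition is reduced to the displayed two-sided bound on the punctured unit disc.

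To prove $\psi(w)\asymp_n|\Im w|$ on $0<|w|\le1$, fix a small $\delta\in(0,1)$ and split. On the compact set $\{|w|\le1,\ |\Im w|\ge\delta\}\subset\mathbb{C}\setminus\mathbb{R}$ the density $\psi$ is continuous (readily checked from \eqref{1132}, since there $|\Im w|$ is bounded away from $0$ and both the integrand and the domain $D_w$ vary continuously with $w$) and positive, hence bounded between two positive constants depending only on $n$ and $\delta$; since also $|\Im w|\in[\delta,1]$ there, $\psi(w)\asymp_n|\Im w|$ on this set. On the remaining set $\{|w|\le1,\ 0<|\Im w|<\delta\}$ one has $\Re w\in[-1,1]$, so it suffices to have a \emph{uniform} version of Proposition~\ref{prop-repuls}: there are $\delta>0$ and $0<c\le C$, depending only on $n$, with $c|y|\le\psi(x_0+iy)\le C|y|$ for all $x_0\in[-1,1]$ and $0<|y|<\delta$. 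The two regimes together then give the claimed bound on the whole punctured disc.

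It remains to establish this uniform repulsion estimate, which is the main point. I would revisit the proof of Proposition~\ref{prop-repuls}: from
\[
\psi(z)=4\,|\Im z|\int_{D_z}\Bigl|\sum_{k=1}^{n-1}t_k\sum_{s=1}^{k}s\,z^{s-1}\bar z^{k-s}\Bigr|^{2}\,dt_1\dots dt_{n-1}
\]
one needs that, as $\Im z\to0$, the inner integral tends to $\tfrac14A(\Re z)$ \emph{uniformly} for $\Re z\in[-1,1]$. This follows because the integrand is a fixed polynomial in $z,\bar z$ and $t$, with $z,\bar z\to\Re z$ uniformly on $|\Re z|\le 1$, while $D_z$ is cut out of the fixed box $[-1,1]^{n-1}$ by finitely many inequalities whose coefficients depend polynomially on $z$, so that $\Vol\bigl(D_z\,\triangle\,\widetilde D_{\Re z}\bigr)\to0$ uniformly in $\Re z$. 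Moreover $A(x_0)$ is continuous on $[-1,1]$ and strictly positive (the region $\widetilde D_{x_0}$ contains a neighbourhood of the origin in $t$-space, on which the nonzero polynomial $\sum_{k}k(k+1)t_kx_0^{k-1}$ is not identically zero), so $0<\min_{[-1,1]}A\le\max_{[-1,1]}A<\infty$. These facts yield $\psi(x_0+iy)=A(x_0)\,|y|\,(1+o(1))$ with the $o(1)$ uniform in $x_0$, hence the uniform two-sided bound for all sufficiently small $|y|$.

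The main obstacle is exactly this uniform control of the moving domain near the real axis, i.e.\ the assertion $\Vol(D_z\triangle\widetilde D_{\Re z})\to0$ uniformly for $\Re z\in[-1,1]$; granting that (and the elementary continuity and positivity of $A$), the remainder is a short compactness argument combined with the exact identity \eqref{1436}.
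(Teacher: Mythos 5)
Your proposal is correct and follows essentially the same route as the paper, which simply states that Proposition~\ref{prop-repuls} gives $\psi(z)\asymp_n|\Im z|$ for $|z|\le 1$ and then invokes~\eqref{1436}. You merely make explicit the uniformity (in $\Re z$) of the repulsion asymptotic and the compactness argument away from the real axis, details the paper leaves implicit.
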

\begin{proof}
It follows from Proposition~\ref{prop-repuls} that $\psi(z) \asymp_n |\Im z|$ for $|z|\le 1$. Hence~\eqref{1436} yields the proof.
\end{proof}

If $|z|$ is relatively small or relatively large, then it is possible to write the limit density in a simpler form.
\begin{proposition}
If $|z|\leq 1-1/\sqrt2$, then
\[
\psi(z) = \frac{2^{n-1}}{3\,|\Im z|} \sum_{k=1}^{n-1} \left|(k+1) z^{k} - \frac{\Im z^{k+1}}{\Im z}\right|^2.
\]
If $|z|\geq 2+\sqrt2$, then
\[
\psi(z) = \frac{2^{n-1}}{3\,|\Im z|} \sum_{k=1}^{n-1} \frac{1}{|z|^{4k+4}}\left|(k+1) \bar{z}^{k} - \frac{\Im z^{k+1}}{\Im z}\right|^2.
\]
\end{proposition}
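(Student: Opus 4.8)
The plan is to handle the two ranges of $|z|$ separately, obtaining the second from the first via the functional equation~\eqref{1436}.

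\emph{The range $|z|\le 1-1/\sqrt2$.} The point here is that the two inequalities defining $D_z$ besides $\max_k|t_k|\le1$ hold automatically, so that $D_z=[-1,1]^{n-1}$ and the integral in~\eqref{1132} factorizes. To see this I would start from the identity $\frac{\Im z^{k+1}}{\Im z}=\sum_{j=0}^{k}z^{k-j}\bar z^{j}$ recorded in the proof of Proposition~\ref{prop-repuls}; for $|z|\le1$ it gives immediately $\bigl|\frac{\Im z^{k+1}}{\Im z}\bigr|\le(k+1)|z|^{k}$ and $\bigl|z^{k}-\frac{\Im z^{k+1}}{\Im z}\bigr|=\bigl|\sum_{j=1}^{k}z^{k-j}\bar z^{j}\bigr|\le k|z|^{k}$. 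Since $\max_k|t_k|\le1$, the left-hand sides of the two constraints are then at most $\sum_{k=1}^{n-1}(k+1)|z|^{k}\le\frac{|z|(2-|z|)}{(1-|z|)^{2}}$ and $|z|\sum_{k=1}^{n-1}k|z|^{k}\le\frac{|z|^{2}}{(1-|z|)^{2}}$. The first is $\le1$ exactly when $2|z|^{2}-4|z|+1\ge0$, i.e.\ for $|z|\le1-1/\sqrt2$, and the second is $\le1$ already for $|z|\le1/2$; this comparison is where the threshold $1-1/\sqrt2$ arises, and it is the only genuinely computational part of the argument. Once $D_z$ is the cube, I would expand $\bigl|\sum_k t_kc_k\bigr|^{2}=\sum_{k,l}t_kt_l\,c_k\overline{c_l}$ with $c_k=(k+1)z^{k}-\frac{\Im z^{k+1}}{\Im z}$; the off-diagonal terms integrate to $0$ over the symmetric cube while $\int_{[-1,1]^{n-1}}t_k^{2}\,dt=\tfrac23\cdot2^{n-2}=\tfrac{2^{n-1}}{3}$ for each $k$, and the first formula follows.

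\emph{The range $|z|\ge 2+\sqrt2$.} I would rewrite~\eqref{1436} as $\psi(z)=|z|^{-4}\psi(1/z)$ and note the arithmetic identity $\frac{1}{2+\sqrt2}=1-\frac1{\sqrt2}$, so that $|1/z|\le1-1/\sqrt2$ and the formula just proved applies to $\psi(1/z)$. It then remains to substitute $w=1/z$ into it, using $\Im(1/z)=-\Im z/|z|^{2}$, the identity $\frac{\Im (1/z)^{k+1}}{\Im(1/z)}=|z|^{-2k}\frac{\Im z^{k+1}}{\Im z}$ (obtained by applying $\frac{\Im w^{k+1}}{\Im w}=\sum_{j=0}^{k}w^{k-j}\bar w^{j}$ to $w=1/z$), and $(k+1)(1/z)^{k}=(k+1)|z|^{-2k}\bar z^{k}$; collecting the powers of $|z|$ from these three sources then produces the second formula. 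I expect this last step to present no real obstacle beyond careful bookkeeping — the sole substantive point in the whole proof is the sharp geometric-series estimate in the first range.
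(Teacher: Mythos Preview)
Your approach is essentially the same as the paper's: show that for $|z|\le 1-1/\sqrt2$ the two extra constraints defining $D_z$ are redundant so that $D_z=[-1,1]^{n-1}$, integrate (the paper leaves this as ``straightforward integration'' while you spell out the diagonal/off-diagonal decomposition), and then derive the large-$|z|$ formula from the small-$|z|$ one via the functional equation~\eqref{1436} together with $(2+\sqrt2)^{-1}=1-1/\sqrt2$. One remark on the final bookkeeping: if you actually carry out the substitution $w=1/z$ with the identities you list, the exponent that emerges is $|z|^{4k+2}$ rather than the printed $|z|^{4k+4}$ (and indeed $|z|^{4k+2}$ is what is consistent with Proposition~\ref{prop-asymp}); this is a typo in the statement, not a flaw in your method.
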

\begin{proof}
For $|z|\leq 1-1/\sqrt2$ it holds
\[
\sum_{k=2}^n (k-1) |z|^k \le 1, \text{ and }
\sum_{k=2}^n k |z|^{k-1} \le 1,
\]
which leads to
\[
D_z=[-1,1]^{n-1},
\]
and a straightforward integration yields the first relation. The second statement follows from the first one and \eqref{1436}.
\end{proof}

Let us conclude the section by considering the case $n=2$.

{\bf Example.} In the case of quadratic algebraic numbers the density function takes the form
\[
\psi(z) = \frac{4}{|\Im z|} \int\limits_{D_z} \left|t \Im z\right|^2 dt,
\]
where $$D_z = \left\{t \in\mathbb{R} : |t| \le \min\left(1, \frac1{|z|^{2}}, \frac{1}{2|\Re z|}\right)\right\}.$$

By some elementary transformations, we obtain
\begin{equation*}
\psi(x+iy) =
\begin{cases}
\frac{8}{3} y, & \text{if } x^2 + y^2 \le 1, \text{ and } |x|\le \frac{1}{2},\\
\frac{y}{3 x^3}, & \text{if } (|x|-1)^2 + y^2 \le 1, \text{ and } |x| > \frac{1}{2},\\
\frac{8 y}{3 (x^2 + y^2)^3}, & \text{if } (|x|-1)^2 + y^2 > 1, \text{ and } x^2 + y^2 > 1.
\end{cases}
\end{equation*}

\section{Proof of Theorem~\ref{1844}}\label{1845}
We start with some notation.

For any Borel set $A\subset \mathbb{R}^d$ denote by $\Vol(A)$ the Lebesgue measure of $A$, denote by $\lambda(A)$ the number of points in $A$ with integer coordinates, and denote by $\lambda^*(A)$ the number of points in $A$ with coprime integer coordinates. The Riemann zeta function is denoted by $\zeta(\cdot)$ and the M\"obius function is denoted by $\mu(\cdot)$.

Denote by $\mathcal{P}_Q$ the class of all integral polynomials of degree at most $n$ and height at most $Q$. The cardinality of this class is $(2Q+1)^{n+1}$. Recall that an integral polynomial is called \emph{prime}, if it is irreducible over $\mathbb{Q}$, primitive (the greatest common divisor of its coefficients equals 1), and its leading coefficient is positive.

For $k\in\{0,1,\dots,n\}$ denote by $\gamma_k$ the number of prime polynomials from $\mathcal{P}_Q$ that have exactly $k$ roots lying in $\Omega$. For any algebraic number its minimal polynomial is prime, and any prime polynomial is a minimal polynomial for some algebraic number. Therefore,
\begin{equation}\label{1900}
\Psi(Q;\Omega)= \sum_{k=1}^n k\gamma_k.
\end{equation}
Consider a subset $A_k\subset[-1,1]^{n+1}$ consisting of all points $(t_0,\dots,t_n)\in[-1,1]^{n+1}$ such that the polynomial $t_nx^n+\dots+t_1x+t_0$ has exactly $k$ roots lying in $\Omega$. Then the number of primitive polynomials from $\mathcal{P}_Q$ which have exactly $k$ roots in $\Omega$ is equal to $\lambda^*(QA_k)$. By the definition of a prime polynomial, we have that
\begin{equation}\label{1901}
\left|\gamma_k- \frac12 \lambda^*(QA_k)\right|\leq R_Q,
\end{equation}
where $R_Q$ denotes a number of reducible  polynomials (over $\mathbb Q$) from $\mathcal{P}_Q$. Note that the factor $\frac12$ arises in the above inequality because prime polynomials have positive leading coefficient.
It is known (see~\cite{bW36}) that
\begin{equation}\label{1902}
R_Q=O\left(Q^n\log^{l(n)}Q\right),\quad Q\to\infty.
\end{equation}
There do not exist reducible over $\mathbb{Q}$ integral quadratic polynomials having non-real roots.
Hence it follows from~\eqref{1900}, \eqref{1901}, and~\eqref{1902} that
\begin{equation}\label{245}
\Psi(Q;\Omega)= \frac12 \sum_{k=1}^n k \lambda^*(Q A_k)+O\left(Q^n \right),\quad Q\to\infty.
\end{equation}
To estimate $\lambda^*(QA_k)$, we need the following lemma.

\begin{lemma}\label{1540}
Consider a region $A\subset\mathbb{R}^d$, $d\geq2,$ with boundary consisting of a finite number of algebraic surfaces only. Then
\begin{equation}\label{1318}
\lambda^*(tA)=\frac{\Vol(A)}{\zeta(d)}t^d+O\left(t^{d-1}\log^{l(d)}t\right),\quad t\to\infty.
\end{equation}
Here the implicit constant in the big-O-notation depends on $d$, the number of the algebraic surfaces, and their maximal degree only.
\end{lemma}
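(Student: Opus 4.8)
The plan is to reduce the counting of coprime lattice points to counting of all lattice points via Möbius inversion, and then invoke a classical result on lattice-point counting in scaled regions with semi-algebraic boundary. First I would write, for every $m\ge 1$, the obvious identity
\[
\lambda(tA) = \sum_{m=1}^{\infty} \lambda^*\!\left(\tfrac{t}{m}A\right),
\]
where the sum is finite since $A$ is bounded (it terminates once $t/m$ is smaller than the scale on which $tA$ can contain a nonzero lattice point). Möbius inversion then gives
\[
\lambda^*(tA) = \sum_{m=1}^{\infty} \mu(m)\,\lambda\!\left(\tfrac{t}{m}A\right).
\]
The main analytic input is the estimate $\lambda(sA) = \Vol(A)\,s^d + O(s^{d-1})$ as $s\to\infty$, valid for a bounded region whose boundary lies in a finite union of algebraic hypersurfaces of bounded degree; this is where the $\log^{l(d)}$ factor can be dispensed with for $d\ge 2$, since for $d\ge 3$ the boundary term is genuinely $O(s^{d-1})$ and for $d=2$ one must be slightly more careful (a rectifiable boundary of finite length gives $O(s)$, and the algebraic-curve hypothesis guarantees finite length). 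One should cite the standard reference for this (e.g.\ the Lipschitz principle / Davenport's lemma on lattice points in semi-algebraic sets) rather than reprove it; the implicit constant depends only on $d$, the number of surfaces, and their maximal degree, exactly as claimed.

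Next I would split the Möbius sum at a threshold, say $m\le t$ versus $m > t$. For $m\le t$ the scale $s = t/m$ is $\ge 1$, so substituting the lattice-point asymptotic yields
\[
\sum_{m\le t}\mu(m)\left(\Vol(A)\,\frac{t^d}{m^d} + O\!\left(\frac{t^{d-1}}{m^{d-1}}\right)\right)
= \Vol(A)\,t^d\sum_{m\le t}\frac{\mu(m)}{m^d} + O\!\left(t^{d-1}\sum_{m\le t}\frac{1}{m^{d-1}}\right).
\]
Since $d\ge 2$, the tail $\sum_{m>t}\mu(m)/m^d$ is $O(t^{1-d})$, so $\sum_{m\le t}\mu(m)/m^d = 1/\zeta(d) + O(t^{1-d})$, contributing $\Vol(A)\,t^d/\zeta(d) + O(t)$ to the main term. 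The error sum $\sum_{m\le t} m^{1-d}$ is $O(\log t)$ when $d=2$ and $O(1)$ when $d\ge 3$, which multiplied by $t^{d-1}$ gives precisely the claimed $O(t^{d-1}\log^{l(d)}t)$. For the range $m>t$ one needs only the trivial bound $\lambda(sA) = O(1)$ for $s$ bounded (indeed $\lambda((t/m)A)=O(1)$ uniformly, and it vanishes once $t/m$ is small enough, with at most $O(t)$ nonvanishing terms), so this range contributes $O(t)$, which is absorbed.

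The step I expect to be the genuine obstacle is making the $d=2$ boundary estimate clean: one wants $\lambda(sA) = \Vol(A)s^2 + O(s)$ with a constant depending only on the number and degree of the bounding algebraic curves, uniformly as the region is scaled. This follows because a real algebraic curve of degree $\delta$ meets any line in at most $\delta$ points, so the boundary of $sA$ is cut into $O(1)$ monotone arcs and the usual ``count lattice points column by column'' argument applies with total error proportional to the number of columns, i.e.\ $O(s)$; the $\log$ in the statement is then only needed to accommodate the case $d=2$ where it is in fact unnecessary, but harmless to keep for a uniform statement. Everything else is bookkeeping: collecting the three contributions (main term $\Vol(A)t^d/\zeta(d)$, the $O(t^{d-1}\log^{l(d)}t)$ from the Möbius-weighted error terms, and the $O(t)$ from the short-scale tail) yields \eqref{1318}.
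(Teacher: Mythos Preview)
Your proposal is correct and follows essentially the same approach as the paper: M\"obius inversion to reduce $\lambda^*$ to $\lambda$, then the Lipschitz principle (Davenport's lemma) for the lattice-point count $\lambda(sA)=\Vol(A)s^d+O(s^{d-1})$, with the $\log$ factor in the final answer arising from $\sum_{m\le t} m^{1-d}$ when $d=2$. The only cosmetic difference is that the paper truncates the M\"obius sum at $[Nt]+1$ (where $A\subset[-N,N]^d$) rather than splitting at $m\le t$ versus $m>t$, which spares your separate treatment of the tail.
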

The results of this type are well-known, see, e.g., the  classical monograph by Bachmann \cite[pp. 436--444]{pB94} (in particular, formulas~(83a) and~(83b) on pages 441--442). For the readers convenience we include a  short proof here.
\begin{proof}
Note that
\[
\lambda(tA)=\sum_{j=1}^{[Nt]+1} \lambda^*\left(\frac tj A\right),
\]
where $N$ is chosen to be so large that $A\subset [-N,N]^d$. Applying the classical M\"obius inversion formula (see, e.g., \cite{hR77}) yields
\begin{equation}\label{021}
\lambda^*(tA)=\sum_{j=1}^{[Nt]+1}\mu(j)\,\lambda\left(\frac tj A\right).
\end{equation}
By the Lipschitz principle (see~\cite{hD51}) it follows that
\begin{equation}\label{806}
\left|\lambda\left(\frac tj A\right)-\left(\frac{t}{j}\right)^{d}\Vol(A)\right|\leq c \cdot \left(\frac{t}{j}\right)^{d-1}
\end{equation}
for some constant $c$ depending on the number of the algebraic surfaces and their maximal degree only. Applying this to \eqref{021} we get
\begin{equation}\label{807}
\left|\lambda^*(tA)-\Vol(A)t^d \sum_{j=1}^{[Nt]+1}\frac{\mu(j)}{j^d}\right|\leq c\, t^{d-1}\sum_{j=1}^{[Nt]+1}\frac{1}{j^{d-1}}.
\end{equation}
It is well known (see, e.g., \cite{hR77}) that
\[
 \sum_{j=1}^\infty\frac{\mu(j)}{j^d}=\frac{1}{\zeta(d)}.
\]
Therefore,
\begin{equation}\label{808}
 \left|\sum_{j=1}^{[Nt]+1}\frac{\mu(j)}{j^d}-\frac{1}{\zeta(d)}\right|\leq\sum_{j=[Nt]+2}^\infty\frac{1}{j^d}\leq\frac{1}{(d-1)(Nt)^{d-1}}.
\end{equation}
Furthermore, it holds that
\begin{equation}\label{809}
\sum_{j=1}^{[Nt]+1}\frac{1}{j^{d-1}}\leq
\begin{cases}
\zeta(d-1),\quad d\geq3,\\
\log([Nt]+1)+1,\quad d=2.
\end{cases}
\end{equation}
Combining \eqref{807}, \eqref{808}, and \eqref{809} completes the proof.
\end{proof}

The right-hand side of~\eqref{1318} is estimated by the right-hand sides of~\eqref{806} and ~\eqref{808} which are of the same order. The one involving the M\"obius function can be made slightly sharper (by a logarithmic factor) using an unconditional estimate for the Mertens function (see, e.g.,~\cite{oRam13}). Assuming the Riemann hypothesis the latter can be improved more (see \cite{kSou09}). However, the error term in the Lipschitz principle  can be made  smaller for special type of regions only (see~\cite{mSkr98}), which is not our case.

Since the boundary of $\Omega$ consists of a finite number of algebraic curves, the boundary of $A_k$ consists of a finite number of algebraic surfaces. Thus it follows from Lemma~\eqref{1540} that
\[
\lambda^*(QA_k)=\frac{\Vol(A_k)}{\zeta(n+1)}Q^{n+1}+O\left(Q^n\right),\quad t\to\infty,
\]
which together with~\eqref{245} implies
\begin{equation}\label{1140}
\Psi(Q;\Omega)= \frac{Q^{n+1}}{2\zeta(n+1)}\sum_{k=1}^n k\Vol(A_k) +O\left(Q^n \right),\quad Q\to\infty.
\end{equation}

To calculate $\sum_{k=1}^n k\Vol(A_k)$, we need the following result from the theory of random polynomials.
Let $\xi_0,\xi_1,\dots,\xi_{n}$ be independent random variables uniformly distributed on $[-1,1]$. Consider the random polynomial
\[
G(x)=\xi_nx^n+\xi_{n-1}x^{n-1}+\dots+\xi_1x+\xi_0.
\]
Denote by $N(\Omega)$ the number of the roots of $G(z)$ lying in $\Omega$. By definition,
\[
\Vol(A_k) = 2^{n+1}\,\mathbb{P}(N(\Omega)=k),
\]
which implies
\begin{equation}\label{1141}
\sum_{k=1}^n k\Vol(A_k) = 2^{n+1}\, \mathbb{E} N(\Omega).
\end{equation}
The right-hand side of the latter relation was calculated in~\cite{dZ04} in a more general setup: it was shown that if the coefficients $\xi_0,\xi_1,\dots,\xi_n$ have a joint probability density function $p(x_0,x_1,\dots,x_n)$, then $\mathbb{E} N(\Omega)$ is given by the formula
\begin{align}\label{1044}
\mathbb{E} N(\Omega)& = \int\limits_\Omega dr d\alpha \int\limits_{\mathbb{R}^{n-1}} dt_1 \dots dt_{n-1}\, \frac{r^2}{\sin \alpha} \\\notag&\times 
\left(
\left[\sum_{k=1}^{n-1} t_k r^{k-1} \left((k+1) \cos (k+1)\alpha - \cos\alpha\, \frac{\sin (k+1)\alpha}{\sin\alpha} \right)\right]^2
\right. \\\notag&+
\left.
\left[\sum_{k=1}^{n-1} k t_k r^{k-1} \sin (k+1)\alpha \right]^2
\right)\\\notag&\times
p\left(\frac{1}{\sin\alpha} \sum_{k=1}^{n-1} t_k r^{k+1} \sin k\alpha,\ -\frac{1}{\sin\alpha} \sum_{k=1}^{n-1} t_k r^{k} \sin {(k+1)\alpha} , \ t_1,\ \dots,\ t_{n-1} \right),
\end{align}
where $r = |z|$ and $\alpha = \arg z$ are polar coordinates in the complex plane.
The corresponding formula in~\cite{dZ04} contains a typo. Here we use the correct version.

In the case when the coefficients are independent and uniformly distributed on $[-1,1]$, their joint probability density function equals 
$$
p=2^{-n-1}\mathbbm{1}_{[-1,1]^{n+1}}.
$$ 
Thus it follows from~\eqref{1141} and~\eqref{1140} that to finish the proof, it is enough to show that for this specific $p$ the right-hand side of~\eqref{1044} is equal to
\[
\int\limits_\Omega\psi(z)\,\nu(dz),
\]
where $\psi$ is defined in~\eqref{1132}. 

Indeed, the integrand in~\eqref{1132} can be transformed as follows:
\begin{multline*}
\left|\sum_{k=1}^{n-1}t_k\left((k+1)z^{k}-\frac{\Im z^{k+1}}{\Im z}\right)\right|^2 =
\frac{1}{r^2}\left|\sum_{k=1}^{n-1}t_k\left((k+1)z^{k+1}-z\frac{\Im z^{k+1}}{\Im z}\right)\right|^2 \\
= \left|\sum_{k=1}^{n-1} t_k r^{k} \left(\left[(k+1) \cos (k+1)\alpha - \cos\alpha\, \frac{\sin (k+1)\alpha}{\sin\alpha}\right] +
i \left[\vphantom{\frac11} k \sin (k+1)\alpha \right]\right)
\right|^2,
\end{multline*}
and the functions that define the region $D_z$ can be transformed as follows:
\begin{align*}
\Bigg|\sum_{k=1}^{n-1} t_k \bigg(z^{k+1} &- z\frac{\Im z^{k+1}}{\Im z}\bigg)\Bigg| = \left|\sum_{k=1}^{n-1} t_k \left(\Re z^{k+1} - \Re z \frac{\Im z^{k+1}}{\Im z}\right)\right| \\
&=\left|\frac{1}{\sin\alpha} \sum_{k=1}^{n-1} t_k r^{k+1} \left(\sin\alpha \cos(k+1)\alpha - \cos\alpha \sin(k+1)\alpha\right)\right| \\
&=\left|\frac{1}{\sin\alpha} \sum_{k=1}^{n-1} t_k r^{k+1} \sin k\alpha \right|,
\end{align*}
and
\[
\left|\frac{1}{\Im z}\sum_{k=1}^{n-1} t_k \Im z^{k+1}\right| = \left|\frac{1}{\sin\alpha} \sum_{k=1}^{n-1} t_k r^{k} \sin {(k+1)\alpha}\right|.
\]
The proof follows.

{\bf Acknowledgments.}
We wish to thank Vasili Bernik for his valuable comments, Natalia Budarina and Hanna Husakova for helpful discussions, and the anonymous referee for his helpful remarks.

\bibliographystyle{abbrv}
\bibliography{corrf2}

\begin{thebibliography}{10}

\bibitem{pB94}
P.~Bachmann.
\newblock {\em {D}ie analytische {Z}ahlentheorie}, volume~2.
\newblock BG Teubner, Leipzig, 1894.

\bibitem{vB99}
V.~Beresnevich.
\newblock On approximation of real numbers by real algebraic numbers.
\newblock {\em Acta Arith.}, 90(2):97--112, 1999.

\bibitem{BBG10}
V.~Beresnevich, V.~Bernik, and F.~G{\"o}tze.
\newblock The distribution of close conjugate algebraic numbers.
\newblock {\em Compos. Math.}, 146(5):1165--1179, 2010.

\bibitem{BV99}
V.~I. Bernik and D.~V. Vasil'ev.
\newblock A {K}hinchin-type theorem for integer-valued polynomials of a complex
  variable.
\newblock {\em Tr. Inst. Mat. (Minsk)}, 3:10--20, 1999.
\newblock (In Russian).

\bibitem{nBfG13}
N.~V. Budarina and F.~G\"otze.
\newblock Distance between conjugate algebraic numbers in clusters.
\newblock {\em Mathematical Notes}, 94(5--6):816--819, 2013.
\newblock (Translation of Mat. Zametki, 94:5 (2013), pp. 780--783).

\bibitem{yB04}
Y.~Bugeaud.
\newblock {\em Approximation by algebraic numbers}, volume 160.
\newblock Cambridge University Press, Cambridge, 2004.

\bibitem{yBaD11}
Y.~Bugeaud and A.~Dujella.
\newblock Root separation for irreducible integer polynomials.
\newblock {\em Bull. Lond. Math. Soc.}, 43(6):1239--1244, 2011.

\bibitem{yBaD14}
Y.~Bugeaud and A.~Dujella.
\newblock Root separation for reducible integer polynomials.
\newblock {\em Acta Arith.}, 162(4):393--403, 2014.

\bibitem{BM04}
Y.~Bugeaud and M.~Mignotte.
\newblock On the distance between roots of integer polynomials.
\newblock {\em Proc. Edinb. Math. Soc. (2)}, 47(3):553--556, 2004.

\bibitem{BM10}
Y.~Bugeaud and M.~Mignotte.
\newblock Polynomial root separation.
\newblock {\em Int. J. Number Theory}, 6(3):587--602, 2010.

\bibitem{CV01}
S.-J. Chern and J.~D. Vaaler.
\newblock The distribution of values of {M}ahler's measure.
\newblock {\em J. Reine Angew. Math.}, (540):1--47, 2001.

\bibitem{hD51}
H.~Davenport.
\newblock On a principle of {L}ipschitz.
\newblock {\em J. Lond. Math. Soc.}, 26:179--183, 1951.

\bibitem{ET50}
P.~Erd\H{o}s and P.~Tur{\'a}n.
\newblock On the distribution of roots of polynomials.
\newblock {\em Ann. of Math. (2)}, 51(1):105--119, 1950.

\bibitem{jE04}
J.-H. Evertse.
\newblock Distances between the conjugates of an algebraic number.
\newblock {\em Publ. Math. Debrecen}, 65(3--4):323--340, 2004.

\bibitem{GKZ15b}
F.~G\"otze, D.~Kaliada, and D.~Zaporozhets.
\newblock Correlation functions of real zeros of random polynomials.
\newblock {\em Preprint, arXiv:1510.00025}, 2015.

\bibitem{GKZ15c}
F.~G\"otze, D.~Kaliada, and D.~Zaporozhets.
\newblock Correlations between real conjugate algebraic numbers.
\newblock {\em Chebyshevskii Sb.}, 16(4):90--99, 2015.

\bibitem{fGdZ14}
F.~G{\"o}tze and D.~Zaporozhets.
\newblock Discriminant and root separation of integral polynomials.
\newblock {\em Preprint, arXiv:1407.6388}, 2014.

\bibitem{IZ13}
I.~Ibragimov and D.~Zaporozhets.
\newblock On distribution of zeros of random polynomials in complex plane.
\newblock {\em Prokhorov and Contemporary Probability Theory. Springer
  Proceedings in Mathematics \& Statistics}, 33:303--323, 2013.

\bibitem{dK13}
D.~Kaliada.
\newblock Distribution of real algebraic numbers of the second degree.
\newblock {\em Vestsi NAN Belarusi. Ser. fiz.-mat. navuk}, (3):54--63, 2013.
\newblock (In Russian).

\bibitem{dK14}
D.~Kaliada.
\newblock On the density function of the distribution of real algebraic
  numbers.
\newblock {\em Preprint, arXiv:1405.1627}, 2014.

\bibitem{dK15}
D.~Koleda.
\newblock On the asymptotic distribution of algebraic numbers with growing
  naive height.
\newblock {\em Chebyshevskii Sb.}, 16(1):191--204, 2015.
\newblock (In Russian).

\bibitem{sL83}
S.~Lang.
\newblock {\em Fundamentals of Diophantine geometry}.
\newblock Springer, Heidelberg, 1983.

\bibitem{MV07}
D.~Masser and J.~D. Vaaler.
\newblock Counting algebraic numbers with large height~{II}.
\newblock {\em Trans. Am. Math. Soc.}, 359(1):427--445, 2007.

\bibitem{MV08}
D.~Masser and J.~D. Vaaler.
\newblock Counting algebraic numbers with large height~{I}.
\newblock In {\em Diophantine Approximation}, volume~16 of {\em Developments in
  Mathematics}, pages 237--243. Springer, 2008.

\bibitem{iP11}
I.~E. Pritsker.
\newblock Distribution of algebraic numbers.
\newblock {\em J. Reine Angew. Math.}, (657):57--80, 2011.

\bibitem{hR77}
H.~Rademacher.
\newblock {\em Lectures on elementary number theory}.
\newblock Huntington, 1977.

\bibitem{oRam13}
O.~Ramar\'e.
\newblock From explicit estimates for primes to explicit estimates for the
  {M}{\"o}bius function.
\newblock {\em Acta Arith.}, 157(4):365--379, 2013.

\bibitem{SS62}
D.~Shparo and M.~Shur.
\newblock On the distribution of roots of random polynomials.
\newblock {\em Vestnik Moskov. Univ. Ser. I Mat. Mekh.}, 3:40--43, 1962.
\newblock (In Russian).

\bibitem{mSkr98}
M.~M. Skriganov.
\newblock Ergodic theory on $\mathrm{SL}(n)$, diophantine approximations and
  anomalies in the lattice point problem.
\newblock {\em Invent. Math.}, 132(1):1--72, 1998.

\bibitem{kSou09}
K.~Soundararajan.
\newblock Partial sums of the {M}{\"o}bius function.
\newblock {\em J. Reine Angew. Math.}, (631):141--152, 2009.

\bibitem{bW36}
B.~L. van~der Waerden.
\newblock Die {S}eltenheit der reduziblen {G}leichungen und der {G}leichungen
  mit {A}ffekt.
\newblock {\em Monatsh. Math. Phys.}, 43(1):133--147, 1936.

\bibitem{dZ04}
D.~Zaporozhets.
\newblock On distribution of the number of real zeros of a random polynomial.
\newblock {\em Zap. Nauchn. Sem. POMI}, 320:69--79, 2004.
\newblock English translation: J. Math. Sci.

\end{thebibliography}

\end{document}